\documentclass[twoside,sort&compress]{article}

\usepackage{hyperref}
\usepackage{fancyhdr}
\usepackage{amsmath}
\usepackage{amssymb}
\usepackage{geometry}
\usepackage{cite}
\usepackage{graphicx}

\setcounter{MaxMatrixCols}{10}

\newtheorem{theorem}{Theorem}[section]

\newtheorem{lemma}[theorem]{Lemma}

\newenvironment{proof}[1][Proof]{\noindent\textbf{#1.} }{\ \rule{0.5em}{0.5em}}
\numberwithin{equation}{section}
\geometry{left=3 cm,right=3 cm,top=2 cm,bottom=2 cm}

\pagestyle{fancy} \fancyhead[RO,LE]{\small}
\fancyhead[LO]{\small Mochammad Idris, Hendra Gunawan, and Eridani \hrule }
\fancyhead[RE]{\small Norm estimates for Bessel-Riesz operators on generalized
Morrey spaces\hrule}

\begin{document}

\setcounter{page}{1} \thispagestyle{plain}


\begin{center}
{\Large NORM ESTIMATES FOR BESSEL-RIESZ OPERATORS ON GENERALIZED MORREY SPACES}
\bigskip

{\large Mochammad Idris$^{1}$, Hendra Gunawan$^2$, and Eridani$^3$}

\bigskip

$^{1}$Department of Mathematics, Institut Teknologi Bandung,\\[0pt]
Bandung 40132, Indonesia\\[0pt]
[{\it Permanent Address}:
Department of Mathematics, Lambung Mangkurat University,\\[0pt]
Banjarbaru Campus, Banjarbaru 70714, Indonesia]\\[6pt]

$^{2}$Department of Mathematics, Institut Teknologi Bandung,\\[0pt]
Bandung 40132, Indonesia\\[6pt]

$^{3}$Department of Mathematics, Airlangga University,\\[0pt]
Campus C Mulyorejo, Surabaya 60115, Indonesia\\[6pt]

E-mail addresses: $^{1}$idemath@gmail.com, $^{2}$hgunawan@math.itb.ac.id,
and $^{3}$eridani.dinadewi@gmail.com

\bigskip
\end{center}

\begin{quote}
\textbf{Abstract.} We revisit the properties of Bessel-Riesz operators and
present a different proof of the boundedness of these operators on generalized
Morrey spaces. We also obtain an estimate for the norm of these operators on
generalized Morrey spaces in terms of the norm of their kernels on an associated
Morrey space. As a consequence of our results, we reprove the boundedness of
fractional integral operators on generalized Morrey spaces, especially of
exponent 1, and obtain a new estimate for their norm.

\textbf{Key words}: {\it Bessel-Riesz operators, fractional integral operators,
generalized Morrey spaces}.

\textbf{MSC 2000}: Primary 42B20; Secondary 26A33, 42B25, 26D10.

\end{quote}


\section{Introduction}

Integral operators such as maximal operators and fractional integral operators
have been studied extensively in the last four decades. Here we are interested
in Bessel-Riesz operators, which are related to fractional integral operators.
Let $0<\alpha<n$ and $\gamma \ge 0$. The operator $I_{\alpha,\gamma}$ which maps
every $f\in L^{p}_{\text loc}(\mathbb{R}^{n}),\ 1\leq p<\infty$, to
$$%
I_{\alpha ,\gamma }f\left( x\right) :=\int_{\mathbb{R}^{n}}K_{\alpha,\gamma}(x-y)
f(y) dy=K_{\alpha,\gamma}*f(x),\quad x\in \mathbb{R}^n,
$$
where $K_{\alpha ,\gamma }(x) :=\frac{|x|^{\alpha-n}}{(1+|x|)^{\gamma}}$, is
called \textit{Bessel-Riesz operator}, and the kernel $K_{\alpha,\gamma}$ is
called \textit{Bessel-Riesz kernel}. The boundedness of these operators on Morrey
spaces and on generalized Morrey spaces was studied in \cite{Idris1, Idris2}.

Let $1\leq p<\infty$ and $\phi:\mathbb{R}^{+}\rightarrow\mathbb{R}^{+}$ be of class
$\mathcal{G}_p$, that is $\phi$ is almost decreasing [$\exists\,C>0$ such that
$\phi(r)\ge C\phi(s)$ for $r\le s$] and $\phi^p(r)r^n$ is almost increasing [$\exists
\,C>0$ such that $\phi^p(r)r^n\le C\phi^p(s)s^n$ for $r\le s$]. Clearly if $\phi$
is of class $\mathcal{G}_p$, then $\phi$ satisfies the {\it doubling condition},
that is, there exists $C>0$ such that $\frac{1}{C}\le \frac{\phi(r)}{\phi(s)} \le C$
whenever $1\le \frac{r}{s}\le 2$. We define the {\it generalized Morrey space}
$L^{p,\phi}(\mathbb{R}^n)$ to be the set of all functions $f\in L^{p}_{\rm loc}
(\mathbb{R}^{n})$ for which
$$
\|f\|_{L^{p,\phi}}:=\sup_{B=B(a,r)} \frac{1}{\phi(r)}\left(\frac{1}{|B|}
\int_{B} |f(x)|^{p} dx\right)^{\frac{1}{p}}<\infty,
$$
where $|B|$ denotes the Lebesgue measure of $B$. (Recall that the Lebesgue measure
of $B=B(a,r)$ is $|B(a,r)|=C_n r^n$ for every  $a \in \mathbb{R}^n$ and $r>0$,
where $C_n>0$ depends only on $n$.)

If $1 \leq p\leq q<\infty$ and $\phi(r):=C_n r^{-\frac{n}{q}}\ (r>0)$, then
$L^{p,\phi}(\mathbb{R}^{n})$ is the classical Morrey space $L^{p,q}(\mathbb{R}^{n})$,
which is equipped by
$$
\|f\|_{L^{p,q}}:=\sup_{B=B(a,r)} |B|^{\frac{1}{q}-\frac{1}{p}}\left(\int_{B}
|f(x)|^{p}dx\right)^{\frac{1}{p}}.
$$
Particularly, for $p=q$, $L^{p,p}(\mathbb{R}^{n})$ is the Lebesgue space
$L^{p}(\mathbb{R}^{n})$.

In \cite{Idris1}, we know that for $\gamma>0$, $K_{\alpha,\gamma}$ is a member of
$L^t(\mathbb{R}^n)$ spaces for some values of $t$ depending on $\alpha$ and $\gamma$.
It follows from Young's inequality \cite{Grafakos} that
$$
\|I_{\alpha,\gamma}f\|_{L^{q}}\leq \| K_{\alpha,\gamma}\|_{L^{t}}
\|f\|_{L^{p}},\quad f\in L^{p}(\mathbb{R}^{n}),
$$
whenever $1\leq p<t^{\prime},\ \frac{1}{q}=\frac{1}{p}-\frac{1}{t'}$ (where $t'$
denotes the dual exponent of $t$) and $\frac{n}{n+\gamma-\alpha}<t<\frac{n}{n-\alpha}$.
This tells us that $I_{\alpha,\gamma}$ is bounded from $L^p(\mathbb{R}^n)$ to
$L^q(\mathbb{R}^n)$ with $\|I_{\alpha,\gamma}\|_{L^p\to L^q}\le\|K_{\alpha,\gamma}\|_{L^t}$.
In \cite{Idris2}, it is also shown that $I_{\alpha,\gamma}$ is bounded on
generalized Morrey spaces but without a good estimate for its norm as on Morrey
spaces. We shall now refine the results, by estimating the norms of the operators
more carefully through the membership of $K_\alpha$ in Morrey spaces.

Note that for $\gamma=0,$ $I_{\alpha,0}=I_\alpha$ is the {\it fractional integral operator}
with kernel $K_\alpha(x):=|x|^{\alpha-n}$. Hardy-Littlewood \cite{Hardy1, Hardy2} and
Sobolev \cite{Sobolev} proved the boundedness of $I_{\alpha}$ on Lebesgue spaces.
The boundedness of $I_{\alpha}$ on Morrey spaces is proved by Spanne \cite{Peetre},
and improved by Adams \cite{Adams} and Chiarenza-Frasca \cite{Chiarenza}.
Later, Nakai \cite{Nakai94} obtained the boundedness of $I_{\alpha}$ on generalized
Morrey spaces, which can be viewed as an extension of Spanne's result. In 2009,
Gunawan-Eridani \cite{Gunawan} proved the boundedness of $I_{\alpha }$ on generalized
Morrey spaces which extends Adams' and Chiarenza-Frasca's results.

In this paper, we give a new proof of the boundedness of $I_{\alpha,\gamma}$ on
generalized Morrey spaces. At the same time, an upper bound for the norm of the
operators is obtained. As a consequence of our result, we have an estimate for
the norm of $I_\alpha$ (from a generalized Morrey space to another) in terms of
the norm of $K_\alpha$ on the associated Morrey space. A lower bound for the norm
of the operators is discussed in \S3.

\section{The Boundedness of $I_{\alpha,\gamma}$ on Generalized Morrey Spaces}

We begin with a lemma about the membership of $K_{\alpha}$ in some Morrey spaces.
Note that throughout this paper, the letters $C$ and $C_k$ denote constants which
may change from line to line.

\begin{lemma}\label{BB04:L03f}
If $0<\alpha<n$, then $K_{\alpha} \in L^{s,t}(\mathbb{R}^{n})$
where $1\le s<t=\frac{n}{n-\alpha}$.
\end{lemma}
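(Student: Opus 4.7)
The plan is to compute $\|K_\alpha\|_{L^{s,t}}$ directly from the definition, i.e., to bound
\[
|B(a,r)|^{\frac{1}{t}-\frac{1}{s}}\left(\int_{B(a,r)}|x|^{s(\alpha-n)}\,dx\right)^{\frac{1}{s}}
\]
uniformly in $a\in\mathbb{R}^n$ and $r>0$. First I would observe that the hypothesis $s<t=\frac{n}{n-\alpha}$ is equivalent to $s(\alpha-n)+n>0$, which is precisely what is needed for local integrability of $|x|^{s(\alpha-n)}$ near the origin; without this, the norm would be infinite on any ball centered at the origin.

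The main step is the uniform estimate
\[
\int_{B(a,r)}|x|^{s(\alpha-n)}\,dx \le C\,r^{s(\alpha-n)+n},
\]
which I would establish by splitting into two cases according to the position of $B(a,r)$ relative to the origin. If $|a|\le 2r$, then $B(a,r)\subset B(0,3r)$ and the bound follows from a polar-coordinate computation centered at the origin. If $|a|>2r$, then every $x\in B(a,r)$ satisfies $|x|\ge |a|-r\ge |a|/2>r$; since $\alpha-n<0$, the integrand is pointwise bounded by $r^{s(\alpha-n)}$ and multiplying by the volume $C_n r^n$ gives the same bound.

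Finally, I would raise the integral to the $1/s$-th power and combine with $|B(a,r)|^{1/t-1/s}=C\,r^{n/t-n/s}$. The exponent of $r$ collapses to
\[
\frac{n}{t}-\frac{n}{s}+(\alpha-n)+\frac{n}{s}=\frac{n}{t}-(n-\alpha),
\]
which vanishes exactly because $t=n/(n-\alpha)$. Therefore the supremum defining $\|K_\alpha\|_{L^{s,t}}$ is finite and $K_\alpha\in L^{s,t}(\mathbb{R}^n)$.

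There is no serious obstacle here: the argument is elementary once the two-case split is spotted. What really drives the estimate is scaling invariance; both $K_\alpha$ and the Morrey norm $\|\cdot\|_{L^{s,t}}$ are homogeneous of compatible degrees precisely when $t=n/(n-\alpha)$, and the cancellation of $r$-exponents is a manifestation of this symmetry. The condition $s<t$ rather than $s\le t$ is also natural: at $s=t$ the integral near the origin would diverge logarithmically in scale and destroy the uniform bound.
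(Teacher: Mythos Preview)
Your proof is correct and follows essentially the same route as the paper: compute the Morrey norm of $K_\alpha$ directly, reduce the integral over $B(a,r)$ to one over a ball centered at the origin, evaluate in polar coordinates, and check that the exponents of $r$ cancel when $t=\frac{n}{n-\alpha}$. The only tactical difference is that the paper invokes in one line the rearrangement-type inequality $\int_{B(a,R)}|x|^{(\alpha-n)s}\,dx\le \int_{B(0,R)}|x|^{(\alpha-n)s}\,dx$ (valid for any nonnegative radial decreasing function), whereas you obtain the same bound up to a constant via your two-case split on $|a|$ versus $r$; both are equally valid and lead to the same conclusion.
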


\begin{proof}
Let $0<\alpha<n$. Take an arbitrary $B=B(a,R)$ where $a\in \mathbb{R}^{n}$ and $R>0$.
For $1\le s<t=\frac{n}{n-\alpha}$, we observe that
$$
|B|^{\frac{s}{t}-1} \int_{B} K_{\alpha}^{s}(x)dx \le |B(0,R)|^{\frac{s}{t}-1} \int_{B(0,R)}
|x|^{(\alpha-n)s}dx \le C\,R^{n(\frac{s}{t}-1)} R^{n(1-\frac{s}{t})}= C.
$$
By taking the supremum over $B=B(a,R)$, we obtain $\|K_{\alpha}\|_{L^{s,t}}^s \le C$.
Hence $K_{\alpha} \in L^{s,t}(\mathbb{R}^{n})$.
\end{proof}

\medskip

\noindent{\tt Remark}. For $0<\alpha<n$ and $\gamma>0$, we know that $K_{\alpha,\gamma} \in
L^{t}(\mathbb{R}^{n})$ for $\frac{n}{n+\gamma-\alpha}<t<\frac{n}{n-\alpha}$ \cite{Idris1}.
By the inclusion property of Morrey spaces (see \cite{GHLM}), we have
$
K_{\alpha,\gamma}\in L^{t}(\mathbb{R}^{n})=L^{t,t}(\mathbb{R}^{n})\subseteq
L^{s,t}(\mathbb{R}^{n}),
$
for $1\leq s \leq t$ and $\frac{n}{n+\gamma-\alpha}<t<\frac{n}{n-\alpha}$. Moreover, because
$K_{\alpha,\gamma}(x)\leq K_{\alpha}(x)$ for every $x \in \mathbb{R}^n$, $K_{\alpha,\gamma}$
is also contained in $L^{s,t}(\mathbb{R}^{n})$ for $1\leq s<t=\frac{n}{n-\alpha}$.

\medskip

As a counterpart of the results in \cite{Idris2, Idris1}, we have the following theorem on
the boundedness of $I_{\alpha,\gamma}$ on Morrey spaces. Note particularly that the estimate
holds for $p_1=1$.

\begin{theorem}\label{BB03:T04}
If $0<\alpha <n$ and $\gamma\ge 0$, then $I_{\alpha,\gamma}$ is bounded from
$L^{p_1,q_1}$ to $L^{p_2,q_2}$ with
$$
\|I_{\alpha,\gamma}f\|_{L^{p_{2},q_{2}}}\leq C\,\|K_{\alpha,\gamma}\|_{L^{s,t}}
\|f\|_{L^{p_{1},q_{1}}},\quad f\in L^{p_{1},q_{1}}(\mathbb{R}^{n}),
$$
whenever $1\leq p_{1}\le q_{1}<\frac{n}{\alpha},\ \frac{1}{p_{2}}=\frac{1}{p_{1}}-\frac{1}{s'}$,
and $\frac{1}{q_{2}}=\frac{1}{q_{1}}-\frac{1}{t'}$, with $1\le s < t=\frac{n}{n-\alpha}$
(for $\gamma\ge 0$) or $1\le s\le t$ and $\frac{n}{n+\gamma-\alpha}<t<\frac{n}{n-\alpha}$
(for $\gamma>0$).
\end{theorem}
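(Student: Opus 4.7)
The plan is to prove this Young-type convolution estimate on Morrey spaces by a standard local-plus-far decomposition. Fix a ball $B=B(a,R)$, split $f=f_1+f_2$ with $f_1:=f\chi_{2B}$, and aim to show that each of $\|I_{\alpha,\gamma}f_i\|_{L^{p_2}(B)}$ $(i=1,2)$ is controlled by $CR^{n/p_2-n/q_2}\|K_{\alpha,\gamma}\|_{L^{s,t}}\|f\|_{L^{p_1,q_1}}$; taking the supremum over all such balls then yields the desired Morrey norm estimate. The key algebraic input is that the hypothesized relations $1/p_2=1/p_1-1/s'$ and $1/q_2=1/q_1-1/t'$ combine to $(1/s-1/t)+(1/p_1-1/q_1)=1/p_2-1/q_2$, which forces the powers of $R$ produced by the two parts to line up.

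For the local piece, I would invoke the classical Young convolution inequality on Lebesgue spaces: since $x\in B$ and $y\in 2B$ force $|x-y|\le 3R$, only the restriction $K_{\alpha,\gamma}\chi_{B(0,3R)}$ enters, so
\[
\|I_{\alpha,\gamma}f_1\|_{L^{p_2}(B)}\le \|K_{\alpha,\gamma}\chi_{B(0,3R)}\|_{L^{s}}\,\|f_1\|_{L^{p_1}},
\]
under the Young relation $1/p_2=1/p_1+1/s-1$ (which is exactly the hypothesis). The definition of the Morrey norm on $L^{s,t}$ gives $\|K_{\alpha,\gamma}\chi_{B(0,3R)}\|_{L^{s}}\le CR^{n/s-n/t}\|K_{\alpha,\gamma}\|_{L^{s,t}}$, while H\"older's inequality on $2B$ combined with the Morrey condition on $f$ gives $\|f_1\|_{L^{p_1}}\le CR^{n/p_1-n/q_1}\|f\|_{L^{p_1,q_1}}$. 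Multiplying these estimates and invoking the identity above delivers the local part of the target bound.

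For the far piece I would use a dyadic decomposition of $\{y:|y-a|\ge 2R\}$ into annuli $\widetilde A_k:=\{y:2^kR\le|y-a|<2^{k+1}R\}$, $k\ge 1$. For $x\in B$ and $y\in\widetilde A_k$ the triangle inequality forces $|x-y|\asymp 2^kR$, so a pointwise bound on $K_{\alpha,\gamma}$, namely $K_{\alpha,\gamma}(x-y)\le C(2^kR)^{\alpha-n}$, or the sharper Bessel bound $\le C(2^kR)^{\alpha-n-\gamma}$ at scales $\gtrsim 1$ when $\gamma>0$, controls the kernel. Combined with the Morrey estimate $\int_{B(a,2^{k+1}R)}|f|\le C(2^kR)^{n(1-1/q_1)}\|f\|_{L^{p_1,q_1}}$, summing the dyadic contributions produces a geometric series that converges precisely because $q_1<n/\alpha$, and the identity $1/q_1-1/q_2=1/t'$ converts the resulting power of $R$ to the desired $R^{-n/q_2}$. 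Integrating over $B$ yields $\|I_{\alpha,\gamma}f_2\|_{L^{p_2}(B)}\le CR^{n/p_2-n/q_2}\|f\|_{L^{p_1,q_1}}$, and the factor $\|K_{\alpha,\gamma}\|_{L^{s,t}}$ can be inserted by absorbing its reciprocal into the constant $C$ (since for fixed parameters this norm is a strictly positive quantity).

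The main obstacle I foresee is the bookkeeping for Case 2 ($\gamma>0$, $t<n/(n-\alpha)$): here the crude pointwise bound $K_{\alpha,\gamma}\le K_\alpha$ gives too weak a power of $R$ in the far part, so one must genuinely exploit the Bessel decay $(1+|x|)^{-\gamma}$ at large scales; in particular, the lower bound $t>n/(n+\gamma-\alpha)$ is precisely what makes the $1/t'$-exponent identity work out in that regime, and one may need to treat the transitional dyadic scales (where $2^kR\sim 1$) separately from the Riesz and Bessel regimes before reassembling the final bound.
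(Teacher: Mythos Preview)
Your decomposition and overall strategy match the paper's (the paper actually derives Theorem~\ref{BB03:T04} as the special case $\phi(r)=r^{-n/q_1}$ of Theorem~\ref{BB04:L04c}, whose proof uses exactly the same local/far split), but two pieces differ in substance.

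\textbf{Local piece.} Your use of Young's inequality $\|K_{\alpha,\gamma}\chi_{B(0,3R)}\|_{L^s}\|f_1\|_{L^{p_1}}$ is cleaner than what the paper does: there the integrand is split as $K^{s/p_2}|f|^{p_1/p_2}\cdot K^{(p_2-s)/p_2}|f|^{(p_2-p_1)/p_2}$, two rounds of H\"older are applied, and Fubini is used after integrating over $B$. Both routes land on the same estimate with the factor $\|K_{\alpha,\gamma}\|_{L^{s,t}}$ appearing through $\|K_{\alpha,\gamma}\|_{L^s(B(0,CR))}\le CR^{n/s-n/t}\|K_{\alpha,\gamma}\|_{L^{s,t}}$; yours is shorter.

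\textbf{Far piece.} Here there is a real gap relative to the paper's intent. You bound $K_{\alpha,\gamma}(2^kR)$ pointwise by $(2^kR)^{\alpha-n}$ (or $(2^kR)^{\alpha-n-\gamma}$ at large scales) and then propose to ``insert'' $\|K_{\alpha,\gamma}\|_{L^{s,t}}$ afterwards by dividing it into the constant. That maneuver proves the inequality only with a constant $C$ that depends on $\alpha$ and $\gamma$, which empties the statement of its content: the paper's explicit point (see the Remark following Theorem~\ref{gbr:04b}) is that $C$ depends on $n,p_1,s,t,\phi$ but \emph{not} on $\alpha,\gamma$, so that the kernel norm genuinely controls the operator norm. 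The paper achieves this with the single observation
\[
K_{\alpha,\gamma}(2^kR)\ \le\ C\,(2^kR)^{-n/s}\Bigl(\int_{2^kR\le |z|<2^{k+1}R}K_{\alpha,\gamma}^s(z)\,dz\Bigr)^{1/s}\ \le\ C\,(2^kR)^{-n/t}\|K_{\alpha,\gamma}\|_{L^{s,t}},
\]
valid because $K_{\alpha,\gamma}$ is radially decreasing. This simultaneously (i) produces the factor $\|K_{\alpha,\gamma}\|_{L^{s,t}}$ organically, and (ii) gives the exponent $-n/t$ uniformly in both parameter regimes, so your worry about ``transitional scales'' in Case~2 never arises. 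Replace your pointwise bound by this Morrey-norm bound and the far piece goes through with the correct constant; the rest of your argument is fine.
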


Theorem \ref{BB03:T04} is in fact a special case of the boundedness of $I_{\alpha,\gamma}$
on generalized Morrey spaces, which is stated in the following theorem.

\begin{theorem}\label{BB04:L04c}
Let $0<\alpha<n$ and $\gamma\ge 0$. If $\phi:\mathbb{R}^{+}\rightarrow\mathbb{R}^{+}$ is of
class $\mathcal{G}_{p_1}$ such that $\int_R^\infty \phi(r)r^{\frac{n}{t'}-1} dr\le C\,\phi(R)
R^{\frac{n}{t'}}$ for every $R>0$, then $I_{\alpha,\gamma}$ is bounded from $L^{p_1,\phi}$ to
$L^{p_2,\psi}$ where $\psi(r):=\phi(r)r^{\frac{n}{t'}}$, with
$$
\|I_{\alpha,\gamma}f\|_{L^{p_2,\psi}} \le C\,\|K_{\alpha,\gamma}\|_{L^{s,t}}
\|f\|_{L^{p_1,\phi}},\quad f \in L^{p_1,\phi}(\mathbb{R}^{n}),
$$
whenever $1\le p_1 <\frac{n}{\alpha}$ and $\frac{1}{p_2}=\frac{1}{p_1}-\frac{1}{s'}$,
with $1\le s < t=\frac{n}{n-\alpha}$ (for $\gamma\ge 0$) or $1\le s\le t$ and
$\frac{n}{n+\gamma-\alpha}<t<\frac{n}{n-\alpha}$ (for $\gamma>0$).
\end{theorem}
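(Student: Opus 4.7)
I would test the target generalized Morrey norm on an arbitrary ball $B=B(a,r)$ and aim to prove
\[
\Bigl(\int_B |I_{\alpha,\gamma}f(x)|^{p_2}\,dx\Bigr)^{1/p_2}\le C\,\|K_{\alpha,\gamma}\|_{L^{s,t}}\,\psi(r)\,|B|^{1/p_2}\,\|f\|_{L^{p_1,\phi}}.
\]
My plan is to split $f=f_1+f_2$ with $f_1:=f\chi_{2B}$ and $f_2:=f\chi_{(2B)^c}$ and then bound $\|I_{\alpha,\gamma}f_i\|_{L^{p_2}(B)}$ for $i=1,2$ separately.

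For the near piece, I would first note that whenever $x\in B$ and $y\in 2B$ one has $|x-y|\le 3r$, so on $B$ the function $I_{\alpha,\gamma}f_1$ coincides with the full convolution $(K_{\alpha,\gamma}\chi_{B(0,3r)})*f_1$. The relation $\tfrac{1}{p_2}=\tfrac{1}{p_1}-\tfrac{1}{s'}$ is exactly the Young exponent relation $\tfrac{1}{s}+\tfrac{1}{p_1}=1+\tfrac{1}{p_2}$, and $\tfrac{1}{s}+\tfrac{1}{p_1}>1$ precisely because $p_2<\infty$; hence the classical Young inequality applied to the truncated kernel gives
\[
\|I_{\alpha,\gamma}f_1\|_{L^{p_2}(B)}\le \|K_{\alpha,\gamma}\chi_{B(0,3r)}\|_{L^{s}}\,\|f_1\|_{L^{p_1}}.
\]
Next I would control the first factor directly from the definition of the Morrey norm, $\|K_{\alpha,\gamma}\chi_{B(0,3r)}\|_{L^{s}}\le C\|K_{\alpha,\gamma}\|_{L^{s,t}}\,r^{n/s-n/t}$, and the second by the class-$\mathcal{G}_{p_1}$ doubling of $\phi$ applied to $2B$: $\|f_1\|_{L^{p_1}}\le C\phi(r)\,r^{n/p_1}\|f\|_{L^{p_1,\phi}}$. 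The algebraic identity $\tfrac{n}{s}-\tfrac{n}{t}+\tfrac{n}{p_1}=\tfrac{n}{p_2}+\tfrac{n}{t'}$, a mere rewriting of the definition of $p_2$, then collapses these two estimates into the required factor $r^{n/p_2}\psi(r)$.

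For the far piece I would estimate $I_{\alpha,\gamma}f_2(x)$ pointwise for $x\in B$ by decomposing $(2B)^c=\bigcup_{k\ge 1}A_k$ with $A_k:=B(a,2^{k+1}r)\setminus B(a,2^kr)$. On $A_k$ one has $|x-y|\sim 2^kr$, so $K_{\alpha,\gamma}(x-y)\le C(2^kr)^{\alpha-n}(1+2^kr)^{-\gamma}$; combining with H\"older and the Morrey bound $\|f\|_{L^{p_1}(B(a,2^{k+1}r))}\le C\phi(2^kr)(2^kr)^{n/p_1}\|f\|_{L^{p_1,\phi}}$ (using doubling) yields $\int_{A_k}|f|\le C(2^kr)^n\phi(2^kr)\|f\|_{L^{p_1,\phi}}$. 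Using the uniform pointwise bound $\rho^\alpha(1+\rho)^{-\gamma}\le C\rho^{n/t'}$, which holds on the admissible range of $t$, one obtains
\[
|I_{\alpha,\gamma}f_2(x)|\le C\|f\|_{L^{p_1,\phi}}\sum_{k\ge 1}(2^kr)^{n/t'}\phi(2^kr).
\]
By the doubling of $\phi$ the dyadic sum is comparable to $\int_r^\infty\phi(\rho)\rho^{n/t'-1}\,d\rho$, which the hypothesis bounds by $C\psi(r)$; integrating the resulting pointwise bound over $B$ supplies the remaining factor $|B|^{1/p_2}$.

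Summing the two bounds, dividing by $\psi(r)|B|^{1/p_2}$, and taking the supremum over $B$ completes the proof. The principal obstacle is the exponent book-keeping in the near part: one must verify that the truncated Young inequality applies and that the Morrey-norm estimates on $K_{\alpha,\gamma}\chi_{B(0,3r)}$ and on $f\chi_{2B}$ combine to reproduce exactly the target power $r^{n/p_2}\psi(r)$. The far part is a more routine dyadic-sum argument in which the hypothesized integral condition on $\phi$ does the essential work.
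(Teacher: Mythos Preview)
Your decomposition $f=f_1+f_2$ and the overall skeleton match the paper's proof, and your near-part argument via Young's inequality is a legitimate --- and in fact cleaner --- alternative to the paper's route. The paper instead writes $K_{\alpha,\gamma}|f|=K^{s/p_2}|f|^{p_1/p_2}\cdot K^{(p_2-s)/p_2}|f|^{(p_2-p_1)/p_2}$, applies H\"older twice, raises to the $p_2$-th power, and uses Fubini to transfer the inner $K^s$-integral to the $x$-variable. Both methods land on the same bound $C\,\|K_{\alpha,\gamma}\|_{L^{s,t}}\,\psi(R)\,|B|^{1/p_2}\,\|f\|_{L^{p_1,\phi}}$; your truncated-kernel Young argument reaches it with less bookkeeping.

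There is, however, a gap in your far-part estimate relative to what the theorem asserts. Your bound $\rho^{\alpha}(1+\rho)^{-\gamma}\le C\rho^{n/t'}$ is correct on the admissible range of $t$, but it discards the kernel in favour of a pure power and so your resulting inequality
\[
|I_{\alpha,\gamma}f_2(x)|\le C\,\psi(r)\,\|f\|_{L^{p_1,\phi}}
\]
carries no factor $\|K_{\alpha,\gamma}\|_{L^{s,t}}$. The theorem's conclusion is precisely the norm inequality with that factor explicit, and the subsequent Remark (and Theorem~3.1) rely on the constant $C$ being independent of $\alpha$ and $\gamma$; since $\|K_{\alpha,\gamma}\|_{L^{s,t}}$ can be made arbitrarily small (e.g.\ by sending $\gamma\to\infty$ with $t<\tfrac{n}{n-\alpha}$), you cannot simply absorb your far-part constant into it. The paper avoids this by reading the kernel value back through its own Morrey norm,
\[
K_{\alpha,\gamma}(2^kR)\le C\,(2^kR)^{-n/s}\Bigl(\int_{2^kR\le|x-y|<2^{k+1}R}K_{\alpha,\gamma}^s(x-y)\,dy\Bigr)^{1/s}\le C\,(2^kR)^{-n/t}\|K_{\alpha,\gamma}\|_{L^{s,t}},
\]
so that the far piece also carries the factor $\|K_{\alpha,\gamma}\|_{L^{s,t}}$. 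Inserting this single line in place of your pointwise bound repairs the argument and yields the estimate in the stated form.
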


\begin{proof}
Suppose that $\gamma>0$ and all the hypotheses hold. For $f \in L^{p_1,\phi}(\mathbb{R}^{n})$
and $B=B(a,R)$ where $a\in\mathbb{R}^n$ and $R>0$, write
$$
f := f_{1}+f_{2} := f_{\chi_{\widetilde{B}}} + f_{\chi_{\widetilde{B}^{c}}},
$$
where $\widetilde{B}=B(a,2R)$ and $\widetilde{B}^{c}$ denotes its complement.
To estimate $I_{\alpha,\gamma}f_{1}$, we observe that for every $x \in B$,
H\"{o}lder's inequality gives
\begin{align*}
|I_{\alpha,\gamma}f_{1}(x)|
\leq & \int_{\widetilde{B}} K_{\alpha,\gamma}(x-y) |f(y)|dy\\
= & \int_{\widetilde{B}} K_{\alpha,\gamma}^{\frac{s}{p_2}}(x-y)
|f(y)|^{\frac{p_1}{p_2}} K_{\alpha,\gamma}^{\frac{p_2-s}{p_2}}(x-y)|f(y)|^{\frac{p_2-p_1}{p_2}} dy\\
\leq & \left( \int_{\widetilde{B}} K_{\alpha,\gamma}^{s}(x-y)|f(y)|^{p_1} dy \right)^{\frac{1}{p_2}}
\left( \int_{\widetilde{B}} K_{\alpha,\gamma }^{\frac{p_2-s}{p_2-1}}(x-y)
|f(y)|^{\frac{p_2-p_1}{p_2-1}}dy \right)^{\frac{1}{p'_2}}.
\end{align*}
Meanwhile, we have
$$
\int_{\widetilde{B}} K_{\alpha,\gamma}^{\frac{p_2-s}{p_2-1}}(x-y)|f(y)|^{\frac{p_2-p_1}{p_2-1}} dy
\leq \left(\int_{\widetilde{B}} K_{\alpha,\gamma}^{s}(x-y) dy \right)^{p'_2(\frac{1}{s}-\frac{1}{p_2})}
\left(\int_{\widetilde{B}} |f(y)|^{p_1} dy \right)^{\frac{p'_2}{s'}}.
$$
Therefore we obtain
\begin{align*}
|I_{\alpha,\gamma}f_{1}(x)|
\le & \left( \int_{\widetilde{B}} K_{\alpha,\gamma}^{s}(x-y)|f(y)|^{p_1}dy \right)^{\frac{1}{p_2}}
\left( \int_{\widetilde{B}} K_{\alpha,\gamma}^{s}(x-y) dy \right)^{\frac{1}{s}-\frac{1}{p_2}}
\left( \int_{\widetilde{B}} |f(y)|^{p_1} dy \right)^{\frac{1}{s'}}\\
\le & \left( \int_{\widetilde{B}} K_{\alpha,\gamma}^{s}(x-y)|f(y)|^{p_1} dy \right)^{\frac{1}{p_2}}
\times C\,R^{n(1-\frac{s}{t})(\frac{1}{s}-\frac{1}{p_2}) + \frac{n}{s'}} \phi^{\frac{p_1}{s'}}(2R)
\|K_{\alpha,\gamma}\|_{L^{s,t}}^{1-\frac{s}{p_2}} \|f\|_{L^{p_1,\phi}}^{\frac{p_1}{s'}}.
\end{align*}
We take the $p_2$-th power and integrate both sides over $B$ to get
\begin{align*}
\int_{B}|I_{\alpha,\gamma}f_{1}(x)|^{p_2} dx
\leq & \int_{B} \int_{\widetilde{B}} K_{\alpha,\gamma}^{s}(x-y)|f(y)|^{p_1} dy\,dx \\
&\ \times \left(C\,R^{n(1-\frac{s}{t})(\frac{1}{s}-\frac{1}{p_2})+\frac{n}{s'}} \phi^{\frac{p_1}{s'}}(2R)
\|K_{\alpha,\gamma}\|_{L^{s,t}}^{1-\frac{s}{p_2}} \|f\|_{L^{p_1,\phi}}^{\frac{p_1}{s'}} \right)^{p_2}.
\end{align*}
By Fubini's theorem, we have
\begin{align*}
\int_{B}| I_{\alpha,\gamma}f_{1}(x)|^{p_2} dx
\le & \int_{\widetilde{B}} |f(y)|^{p_1} \left(\int_{B} K_{\alpha,\gamma}^{s}(x-y) dx\right)dy \\
&\ \times \left(C\,R^{n(1-\frac{s}{t})(\frac{1}{s}-\frac{1}{p_2})+\frac{n}{s'}} \phi^{\frac{p_1}{s'}}(2R)
\|K_{\alpha,\gamma}\|_{L^{s,t}}^{1-\frac{s}{p_2}}\|f\|_{L^{p_1,\phi}}^{\frac{p_1}{s'}} \right)^{p_2}\\
\le &\ C\,R^{n(1-\frac{s}{t})} \|K_{\alpha,\gamma}\|_{L^{s,t}}^s \int_{\widetilde{B}} |f(y)|^{p_1} dy \\
&\ \times \left(R^{n(1-\frac{s}{t})(\frac{1}{s}-\frac{1}{p_2})+\frac{n}{s'}} \phi^{\frac{p_1}{s'}}(2R)
\|K_{\alpha,\gamma}\|_{L^{s,t}}^{1-\frac{s}{p_2}}\|f\|_{L^{p_1,\phi}}^{\frac{p_1}{s'}} \right)^{p_2}\\
\le &\ C\,R^{n(1-\frac{s}{t})+n}\phi^{p_1}(2R) \|K_{\alpha,\gamma}\|_{L^{s,t}}^s \|f\|_{L^{p_1,\phi}}^{p_1}\\
&\ \times \left(R^{n(1-\frac{s}{t})(\frac{1}{s}-\frac{1}{p_2})+\frac{n}{s'}} \phi^{\frac{p_1}{s'}}(2R)
\|K_{\alpha,\gamma}\|_{L^{s,t}}^{1-\frac{s}{p_2}} \|f\|_{L^{p_1,\phi}}^{\frac{p_1}{s'}} \right)^{p_2}\\
\le &\ C\,|B|\,\psi^{p_2}(R) \|K_{\alpha,\gamma}\|_{L^{s,t}}^{p_2} \|f\|_{L^{p_1,\phi}}^{p_2},
\end{align*}
whence
$$
\frac{1}{\psi(R)} \left(\frac{1}{|B|} \int_{B} |I_{\alpha,\gamma}f_{1}(x)|^{p_2} dx
\right)^{\frac{1}{p_2}}\leq C\,\|K_{\alpha,\gamma}\|_{L^{s,t}} \|f\|_{L^{p_1,\phi}}.
$$

Next, we estimate $I_{\alpha,\gamma}f_{2}$. For every $x\in B=B(a,R)$, we observe that
\begin{align*}
| I_{\alpha,\gamma}f_{2}(x) |
\leq & \int_{\widetilde{B}^{c}} K_{\alpha,\gamma}(x-y)|f(y)| dy \\
\leq & \int_{|x-y |\ge R} K_{\alpha,\gamma}(x-y)|f(y)| dy \\
= & \sum_{k=0}^{\infty} \int_{2^{k}R\leq |x-y|<2^{k+1}R}K_{\alpha,\gamma}(x-y)|f(y)| dy \\
\leq &\ \sum_{k=0}^{\infty} K_{\alpha,\gamma}(2^k R) \int_{2^{k}R\le |x-y|<2^{k+1}R} |f(y)| dy\\
\leq &\ C \sum_{k=0}^{\infty} K_{\alpha,\gamma}(2^k R) (2^k R)^{\frac{n}{p'_1}}
\left(\int_{2^{k}R\leq |x-y| <2^{k+1}R} |f(y)|^{p_{1}}dy\right)^{\frac{1}{p_{1}}}\\
\leq &\ C\,\|f\|_{L^{p_1,\phi}} \sum_{k=0}^{\infty} K_{\alpha,\gamma}(2^kR)(2^kR)^{n}\phi(2^kR).
\end{align*}
For every $k\in \mathbb{Z}$, we have
$$
K_{\alpha,\gamma}(2^k R)\le C\,{(2^k R)^{-\frac{n}{s}}}\left(\int_{2^{k}R\le |x-y|<2^{k+1}R}
K^s_{\alpha,\gamma}(x-y)dy \right)^{\frac{1}{s}}\le C\,(2^k R)^{-\frac{n}{t}}
\|K_{\alpha,\gamma}\|_{L^{s,t}}.
$$
Since $\int_R^\infty \phi(r)r^{\frac{n}{t'}-1}dr \le C\,\phi(R)R^{\frac{n}{t'}}$, we get
\begin{align*}
| I_{\alpha,\gamma}f_{2}(x) |
\leq &\ C\,\|K_{\alpha,\gamma}\|_{L^{s,t}} \|f\|_{L^{p_1,\phi}} \sum_{k=0}^{\infty}
(2^k R)^{\frac{n}{t'}}\phi(2^k R)\\
\leq &\ C\,\|K_{\alpha,\gamma}\|_{L^{s,t}} \|f\|_{L^{p_1,\phi}} \int_R^\infty
\phi(r)r^{\frac{n}{t'}-1}dr\\
\leq &\ C\,\|K_{\alpha,\gamma}\|_{L^{s,t}} \|f\|_{L^{p_1,\phi}} \phi(R) R^{\frac{n}{t'}}\\
= &\ C\,\|K_{\alpha,\gamma}\|_{L^{s,t}} \|f\|_{L^{p_1,\phi}} \psi(R).
\end{align*}
Raising to the $p_2$-th power and integrating over $B$, we obtain
$$
\int_{B}| I_{\alpha,\gamma}f_{2}(x) |^{p_{2}}dx \leq C\,\bigl(\|K_{\alpha,\gamma}\|_{L^{s,t}}
\|f\|_{L^{p_{1},\phi}}\bigr)^{p_2} \psi^{p_2}(R)|B|,
$$
whence
$$
\frac{1}{\psi(R)}\left(\frac{1}{|B|}\int_{B}| I_{\alpha,\gamma}f_{2}(x) |^{p_{2}}dx
\right)^{\frac{1}{p_2}}\leq C\,\|K_{\alpha,\gamma }\|_{L^{s,t}}\|f\|_{L^{p_{1},\phi}}.
$$

Combining the two estimates for $I_{\alpha,\gamma}f_1$ and $I_{\alpha,\gamma}f_2$, we obtain
$$
\frac{1}{\psi(R)}\left(\frac{1}{|B|}
\int_{B}| I_{\alpha,\gamma}f(x) |^{p_{2}}dx\right)^{\frac{1}{p_2}}
\leq C\,\|K_{\alpha,\gamma}\|_{L^{s,t}}\|f\|_{L^{p_{1},\phi}}.
$$
Since this inequality holds for every $a\in\mathbb{R}^n$ and $R>0$, it follows that
$$
\|I_{\alpha,\gamma}f\|_{L^{p_{2},\psi}}\leq C\,\| K_{\alpha,\gamma}\|_{L^{s,t}}
\|f\|_{L^{p_{1},\phi}},
$$
as desired.

We may repeat the same argument and use Lemma \ref{BB04:L03f} to obtain the same inequality
for the case where $\gamma=0$ and $1\le s < t=\frac{n}{n-\alpha}$.
\end{proof}

\medskip

\noindent{\tt Remark}. Theorems \ref{BB03:T04} and \ref{BB04:L04c} give us upper estimates
for the norm of the Bessel-Riesz operators (from one Morrey space to another). In particular,
for $\gamma=0$, we have an estimate for the norm of the fractional integral operator
$I_\alpha$ in terms of the norm of its kernel (on the associated Morrey space), which follows
from the inequality
$$
\|I_{\alpha}f\|_{L^{p_{2},\psi}}\leq C\,\|K_{\alpha}\|_{L^{s,t}} \|f\|_{L^{p_{1},\phi}},
$$
for $1\le p_1 <\frac{n}{\alpha}$ and $\frac{1}{p_2}=\frac{1}{p_1}-\frac{1}{s'}$, with
$1\le s< t=\frac{n}{n-\alpha}$.

\medskip

In the following section, we discuss lower estimates for the norm of the operators
in terms of the norm of the Bessel-Riesz kernel (on some Morrey spaces).

\section{An Estimate for the Norm of the Operators}

Recall that if $(X, \| \cdot \|_{X})$ and $(Y, \| \cdot \|_{Y})$ are normed
spaces and that $T : (X,\| \cdot \|_{X}) \rightarrow (Y,\| \cdot \|_{Y})$ is
a linear operator, then the norm of $T$ (from $X$ to $Y$) is defined by
$$
\|T\|_{X\to Y} :=\sup_{f \neq 0} \frac{\|Tf\|_{Y}}{\|f\|_{X}}.
$$
Knowing that the Bessel-Riesz operator $I_{\alpha,\gamma}$ is a linear operator on
Morrey spaces, we would like to estimate the norm of $I_{\alpha,\gamma}$ from a
(generalized) Morrey space to another. We obtain the following result.

\begin{theorem}\label{gbr:04b}
Let $0<\alpha<n$, $\gamma\ge0$, and $\phi$ is of class $\mathcal{G}_{p_1}$ where $1\le p_1
<\frac{n}{\alpha}$. If $\phi(r)r^n$ is almost increasing and for every $R>0$ we have
(i) $\int_R^\infty \phi(r)r^{\frac{n}{t'}-1}\,dr\le C_1\phi(R)R^{\frac{n}{t'}}$,
(ii) $\int_0^R \phi^{p_1}(r) r^{n-1}\,dr\leq C_2\,\phi^{p_1}(R)R^n$, and
(iii) $\int_0^R \frac{r^{n-1}}{\phi^{s'}(r)r^{ns'}}\,dr \le \frac{C_3 R^n}{\phi^{s'}(R)R^{ns'}}$,
where $1\le p_1<t$ and $1< s < t=\frac{n}{n-\alpha}$ (for $\gamma\ge 0$) or $1\le p_1 \le t$,
$1< s\le t$, and $\frac{n}{n+\gamma-\alpha}<t<\frac{n}{n-\alpha}$ (for $\gamma>0$), then we have
$$
C_4 \| K_{\alpha,\gamma} \|_{L^{p_{1},t}} \leq \| I_{\alpha,\gamma}\|_{L^{p_1,\phi}\to
L^{p_2,\psi}} \le C_5 \| K_{\alpha,\gamma} \|_{L^{s,t}},
$$
whenever $\frac{1}{p_2}=\frac{1}{p_1}-\frac{1}{s'}$ and $\psi(r):=\phi(r)r^{\frac{n}{t'}}$.
In particular, for $\gamma=0$, $1\le p_1<t$, and $1< s < t=\frac{n}{n-\alpha}$, we have
$$
C_4 \| K_{\alpha} \|_{L^{p_{1},t}} \leq \| I_{\alpha}\|_{L^{p_1,\phi}\to L^{p_2,\psi}}
\le C_5 \| K_{\alpha} \|_{L^{s,t}},
$$
whenever $\frac{1}{p_2}=\frac{1}{p_1}-\frac{1}{s'}$ and $\psi(r):=\phi(r)r^{\frac{n}{t'}}$.
\end{theorem}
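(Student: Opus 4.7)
\medskip\noindent{\tt Proof plan.} The upper bound $\|I_{\alpha,\gamma}\|_{L^{p_1,\phi}\to L^{p_2,\psi}}\le C_5\,\|K_{\alpha,\gamma}\|_{L^{s,t}}$ is immediate from Theorem~\ref{BB04:L04c}, since hypothesis~(i) is precisely the integral condition required there. The heart of the proof is therefore the lower bound, for which I plan to exhibit a one-parameter family of test functions whose operator ratios witness the Morrey norm of the kernel.

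The natural candidate is $f_R:=\chi_{B(0,R)}$, $R>0$. Using the class-$\mathcal{G}_{p_1}$ assumptions on $\phi$---$\phi$ almost decreasing to control balls of radius $r\le R$, and $\phi^{p_1}(r)r^n$ almost increasing to control radii $r\ge R$---one obtains $\|f_R\|_{L^{p_1,\phi}}\approx 1/\phi(R)$, with the lower bound realized by the test ball $B(0,R)$ itself. To bound $I_{\alpha,\gamma}f_R$ pointwise from below, I would observe that for $x\in B(0,R/2)$ the translated set $B(0,R)-x$ contains $B(0,R/2)$, which yields the inequality
$$
I_{\alpha,\gamma}f_R(x)\ge \int_{B(0,R/2)} K_{\alpha,\gamma}(z)\,dz =: A_R,\qquad x\in B(0,R/2).
$$
Evaluating the generalized Morrey norm of $I_{\alpha,\gamma}f_R$ on the ball $B(0,R/2)$ and invoking the doubling property of $\psi$ then produces $\|I_{\alpha,\gamma}f_R\|_{L^{p_2,\psi}}\ge C\,A_R/\psi(R)$.

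The main obstacle is identifying $A_R/\psi(R)$ with $\|K_{\alpha,\gamma}\|_{L^{p_1,t}}/\phi(R)$ up to constants. Using the explicit form $K_{\alpha,\gamma}(z)=|z|^{\alpha-n}(1+|z|)^{-\gamma}$, direct integration gives $A_R\approx R^\alpha=R^{n/t'}$ for small $R$, while the argument of Lemma~\ref{BB04:L03f} (applied with exponent $p_1$ in place of $s$, which is legal since $p_1<t$) produces $\|K_{\alpha,\gamma}\|_{L^{p_1,t}(B(0,R))}\approx 1$ for small $R$. Since $\psi(R)=\phi(R)R^{n/t'}$, the $R^{n/t'}$ factors cancel and we obtain $A_R/\psi(R)\ge C\,\|K_{\alpha,\gamma}\|_{L^{p_1,t}(B(0,R))}/\phi(R)$. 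Dividing by $\|f_R\|_{L^{p_1,\phi}}$ and taking the supremum over $R$---using the radial decrease of $K_{\alpha,\gamma}$ to reduce the Morrey supremum to balls centered at the origin---then furnishes $\|I_{\alpha,\gamma}\|_{L^{p_1,\phi}\to L^{p_2,\psi}}\ge C\,\|K_{\alpha,\gamma}\|_{L^{p_1,t}}$. Conditions~(ii) and~(iii) are the Dini-type hypotheses that render these size comparisons uniform across the generalized Morrey setting (in particular for the optimization over the scale parameter $R$), and the specialization to $\gamma=0$ follows verbatim with the simpler kernel $K_\alpha(z)=|z|^{\alpha-n}$.
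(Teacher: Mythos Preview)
Your test-function idea is sensible, but it does not reach the stated lower bound, and it is \emph{not} the paper's argument.  Observe what your computation actually produces: with $f_R=\chi_{B(0,R)}$ you get $\|I_{\alpha,\gamma}f_R\|_{L^{p_2,\psi}}/\|f_R\|_{L^{p_1,\phi}}\ge C\,A_R/R^{n/t'}$, and $A_R/R^{n/t'}\approx |B(0,R/2)|^{1/t-1}\int_{B(0,R/2)}K_{\alpha,\gamma}$ is exactly the $L^{1,t}$ Morrey quantity at scale $R/2$.  Taking the supremum over $R$ therefore gives $\|I_{\alpha,\gamma}\|\ge C\,\|K_{\alpha,\gamma}\|_{L^{1,t}}$, which is \emph{weaker} than the claimed $\|K_{\alpha,\gamma}\|_{L^{p_1,t}}$ bound since $\|\cdot\|_{L^{1,t}}\le\|\cdot\|_{L^{p_1,t}}$.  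Your ``bridging'' step---``$A_R\approx R^{n/t'}$ while $\|K_{\alpha,\gamma}\|_{L^{p_1,t}(B(0,R))}\approx 1$, so they are comparable''---is circular: it just computes both sides for the specific kernel and observes they are finite, with the comparison constant depending on $\alpha,\gamma$.  Worse, the identity $R^\alpha=R^{n/t'}$ you invoke holds only when $t=\frac{n}{n-\alpha}$; in the second regime $\frac{n}{n+\gamma-\alpha}<t<\frac{n}{n-\alpha}$ one has $n/t'<\alpha$, so $A_R/R^{n/t'}\to 0$ as $R\to 0$ and your small-$R$ analysis collapses.  Finally, hypotheses (ii) and (iii) never enter your argument in any concrete way---your sketch proves a (weaker) bound without them.

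The paper proceeds quite differently.  It tests with a \emph{single} function $f_0(x):=\phi(|x|)$; condition~(ii) is exactly what gives $\|f_0\|_{L^{p_1,\phi}}\approx 1$.  The key pointwise estimate is $I_{\alpha,\gamma}f_0(x)\ge C\,\rho(|x|)\,K_{\alpha,\gamma}(x)$ with $\rho(r):=\phi(r)r^n$, obtained by restricting the convolution to $B(x,2|x|)$.  This places a \emph{weighted} copy of the kernel inside $\|I_{\alpha,\gamma}f_0\|_{L^{p_2,\psi}}$, and then a H\"older inequality with exponents $(s',p_2)$---together with condition~(iii), which controls $\int_B \rho^{-s'}(|x|)\,dx$---upgrades the $L^{p_2}$ control of $\rho K_{\alpha,\gamma}$ to $L^{p_1}$ control of $K_{\alpha,\gamma}$ itself, yielding $\|K_{\alpha,\gamma}\|_{L^{p_1,t}}$ on the nose.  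If you want to stay with characteristic functions, you would need an honest reverse-H\"older step to pass from $\|K_{\alpha,\gamma}\|_{L^{1,t}}$ to $\|K_{\alpha,\gamma}\|_{L^{p_1,t}}$; the paper's choice of $f_0$ and its use of~(iii) are precisely what circumvents that obstacle.
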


\begin{proof}
Suppose that $\gamma>0$ and all the hypotheses hold. By Theorem \ref{BB04:L04c}, we already
have
$$
\|I_{\alpha,\gamma}\|_{L^{p_1,\phi}\to L^{p_2,\psi}}\le C\,\| K_{\alpha,\gamma} \|_{L^{s,t}}.
$$
To prove the lower estimate, put $\rho(r):=\phi(r)r^n$. Let $B=B(a,R)$ where
$a\in\mathbb{R}^n$ and $R>0$. By our assumptions on $\phi$, we have
$$
|B|^{\frac{1}{t}}\psi(R)\left(\frac{1}{|B|}\int_B \rho^{-s'}(|x|)\,dx\right)^{\frac{1}{s'}}
\le C\,\phi(R)R^{\frac{n}{s}}\left(\int_0^R \frac{r^{n-1}}{\phi^{s'}(r)r^{ns'}}\,dr
\right)^{\frac{1}{s'}}\le C.
$$
Now take $f_0(x):=\phi(|x|)$. Here $\|f_0\|_{L^{p_1,\phi}}\approx 1$.
Moreover, one may compute that
$$
I_{\alpha,\gamma}f_0(x)\ge \int_{B(x,2|x|)} K_{\alpha,\gamma}(x-y)f_0(y)dy
\ge C\,K_{\alpha,\gamma}(2x)\phi(|x|)|x|^n=C\,\rho(|x|)K_{\alpha,\gamma}(x),
$$
for every $x\in\mathbb{R}^n$. It follows that
$$
\|\rho(|\cdot|)K_{\alpha,\gamma}\|_{L^{p_2,\psi}}\le C\,\|I_{\alpha,\gamma}f_0\|_{L^{p_2,\psi}}
\le C\,\|I_{\alpha,\gamma}\|_{L^{p_1,\phi}\to L^{p_2,\psi}}.
$$
Next, by H\"older's inequality, we have
$$
\left(\int_B K_{\alpha,\gamma}^{p_1}(x)dx\right)^{\frac{1}{p_1}} \le
\left(\int_B \rho^{-s'}(|x|)\,dx\right)^{\frac{1}{s'}}
\left(\int_B \bigl[\rho(|x|) K_{\alpha,\gamma}(x)\bigr]^{p_2}dx\right)^{\frac{1}{p_2}},
$$
whence
\begin{align*}
|B|^{\frac{1}{t}-\frac{1}{p_1}}\left(\int_B K_{\alpha,\gamma}^{p_1}(x)dx\right)^{\frac{1}{p_1}}\le
&\ |B|^{\frac{1}{t}}\psi(R)\left(\frac{1}{|B|}\int_B \rho^{-s'}(|x|)\,dx\right)^{\frac{1}{s'}}\\
&\ \times \frac{1}{\psi(R)}\left(\frac{1}{|B|}\int_B \bigl[\rho(|x|)K_{\alpha,\gamma}(x)
\bigr]^{p_2}dx\right)^{\frac{1}{p_2}}\\
\le &\ C\,\|I_{\alpha,\gamma}\|_{L^{p_1,\phi}\to L^{p_2,\psi}}.
\end{align*}
By taking the supremum over $B=B(a,R)$, we conclude that
$$
C\,\| K_{\alpha,\gamma} \|_{L^{p_1,t}} \leq \|I_{\alpha,\gamma}\|_{L^{p_1,\phi}\to L^{p_2,\psi}},
$$
as desired.

The same argument applies for the case where $\gamma=0$, with $1\le p_1<t$ and $1< s < t=\frac{n}
{n-\alpha}$.
\end{proof}

\medskip

\noindent{\tt Remark}. One may observe that the constants $C_4$ and $C_5$ in Theorem \ref{gbr:04b}
depend on $\phi, n, p_1, s$, and $t$, but not on $\alpha$ and $\gamma$. Although the lower
and the upper bound are not comparable, we may still get useful information from these estimates,
especially for the norm of the operator $I_{\alpha}$ from $L^{p_1,\phi}$ to $L^{p_2,\psi}$.
Observe that for $1\le p_1< t=\frac{n}{n-\alpha}$, we have $\|K_{\alpha}\|_{L^{p_1,t}}^{p_1}=
\frac{C}{(\alpha-n)p_1+n}\ge \frac{C}{\alpha}.$ Hence, if all the hypotheses in Theorem 3.1 hold
for the case where $\gamma=0$, then we obtain $\|I_{\alpha}\|_{L^{p_1,\phi} \to L^{p_2,\psi}}
\ge \frac{C}{\alpha}$, which blows up when $\alpha\to 0^+$. For $\phi(r):=r^{-\frac{n}{q_1}}$
with $1\le p_1<q_1<\min \{s,\frac{n}{\alpha}\}$ and $1<s<\frac{n}{n-\alpha}$, our result reduces 
to the estimate $\|I_{\alpha}\|_{L^{p_1,q_1}\to L^{p_2,q_2}} \ge \frac{C}{\alpha}$ where 
$\frac{1}{p_2}=\frac{1}{p_1}-\frac{1}{s'}$ and $\frac{1}{q_2}=\frac{1}{q_1}-\frac{\alpha}{n}$. 
A similar behavior of the norm of $I_\alpha$ from $L^{p_1}$ to $L^{p_2}$ for $\frac{1}{p_2}=
\frac{1}{p_1}-\frac{\alpha}{n}$ when $\alpha\to 0^+$ is observed in \cite[Chapter 4]{Lieb}.

\bigskip

\textbf{Acknowledgements.} The first and second authors are supported by ITB Research \&
Innovation Program 2016. All authors would like to thank the anonymous referee for his/her
careful reading and useful comments on the earlier version of this paper.


\begin{thebibliography}{10}\parskip=0pt
\bibitem{Adams} \textsc{D.\,R. Adams}, ``A note on Riesz potentials'',
\emph{Duke Math. J}. \textbf{42} (1975), 765--778.

\bibitem{Chiarenza} \textsc{F. Chiarenza and M. Frasca}, ``Morrey spaces and
Hardy-Littlewood maximal function'', \emph{Rend. Mat}. \textbf{7} (1987), 273--279.

\bibitem{Eridani2} \textsc{Eridani, H. Gunawan, and E. Nakai}, ``On generalized
fractional integral operators'', \emph{Sci. Math. Jpn.} \textbf{60} (2004), 539--550.

\bibitem{Grafakos} \textsc{L. Grafakos}, \emph{Classical Fourier Analysis},
Graduate Texts in Matemathics, Vol.~249, Springer, New York, 2008.

\bibitem{Gunawan} \textsc{H. Gunawan and Eridani}, ``Fractional integrals and
generalized Olsen inequalities'', \emph{Kyungpook Math. J.} \textbf{49} (2009),
31--39.

\bibitem{GHLM} \textsc{H. Gunawan, D.\,I. Hakim, K.\,M. Limanta, and A.\,A. Masta},
``Inclusion properties of generalized Morrey spaces'', \emph{Math. Nachr.}
{\bf 290} (2017), 332--340 [DOI: 10.1002/mana.201500425].

\bibitem{Hardy1} \textsc{G.\,H. Hardy and J.\,E. Littlewood}, ``Some properties
of fractional integrals.~I'', \emph{Math. Zeit}. \textbf{27} (1927), 565--606.

\bibitem{Hardy2} \textsc{G.\,H. Hardy and J.\,E. Littlewood}, ``Some properties
of fractional integrals.~II'', \emph{Math. Zeit}. \textbf{34} (1932), 403--439.

\bibitem{Idris3} \textsc{M. Idris and H. Gunawan}, ``The boundedness of
generalized Bessel-Riesz operators on generalized Morrey spaces'', presented at
\emph{The Asian Mathematical Conference 2016}, Bali, 2016.

\bibitem{Idris2} \textsc{M. Idris, H. Gunawan, and  Eridani}, ``The boundedness
of Bessel-Riesz operators on generalized Morrey spaces'', \emph{Austral. J. Math.
Anal. Appl}. \textbf{13} (2016), Issue 1, Article 9, 1--10.

\bibitem{Idris1} \textsc{M. Idris, H. Gunawan, J. Lindiarni, and Eridani},
``The boundedness of Bessel-Riesz operators on Morrey spaces'', \emph{AIP
Conference Proceedings} \textbf{1729, 02000} (2016) [DOI: 10.1063/1.4946909].

\bibitem{Lieb} \textsc{E.\,H. Lieb and M. Loss}, \emph{Analysis}, 2nd ed.,
American Mathematical Society, Providence, Rhode Island, 2001.

\bibitem{Nakai94} \textsc{E. Nakai}, ``Hardy-Littlewood maximal operator,
singular integral operator and Riesz potentials on generalized Morrey spaces'',
\emph{Math. Nachr}. \textbf{166} (1994), 95--103.

\bibitem{Nakai02} \textsc{E. Nakai}, ``On generalized fractional integrals on the
weak Orlicz spaces, $BMO_\phi$, the Morrey spaces and the Campanato spaces'',
in \textsc{M. Cwikel et al. (Eds.)},
\emph{Function Spaces, Interpolation Theory and Related Topics},
De Gruyter, Berlin, 2002, 389--401.

\bibitem{Nakai03} \textsc{E. Nakai}, ``On Orlicz-Morrey spaces'', \emph{Research Report}
[http://repository.kulib.kyoto-u.ac.jp/dspace/bitstream/2433/58769/1/1520-10.pdf,
accessed on August 17, 2015].

\bibitem{Peetre} \textsc{J. Peetre}, ``On the theory of $\mathcal{L}_{p,\lambda}$
spaces'', \emph{J. Funct. Anal}. \textbf{4} (1969), 71--87.

\bibitem{Sobolev} \textsc{S.\,L. Sobolev}, ``On a theorem in functional analysis''
(Russian), \emph{Math. Sob}. \textbf{46} (1938), 471--497. [English translation in
\emph{Amer. Math. Soc. Transl. ser.2.} \textbf{34} (1963), 39--68].
\end{thebibliography}
\end{document}